\theoremstyle{plain}
\newtheorem{theorem}[equation]{Theorem}
\newtheorem{lemma}[equation]{Lemma}
\theoremstyle{definition}
\newtheorem{definition}[equation]{Definition}
\theoremstyle{remark}
\newtheorem{remark}[equation]{Remark}
\numberwithin{equation}{section}
\newcommand{\RR}{{\mathbb{R}}}
\newcommand{\eps}{\varepsilon}
\newcommand{\dist}{\operatorname{dist}}
\newcommand{\re}{\mathbb{R}}
\newcommand{\ree}{\mathbb{R}^{n+1}}
\newcommand{\dd}{\mathbb{D}}
\newcommand{\C}{\mathcal{C}}
\newcommand{\om}{\Omega}
\newcommand{\F}{\mathcal{F}}
\newcommand{\A}{\mathcal{A}}
\newcommand{\E}{\mathcal{E}}
\newcommand{\W}{\mathcal{W}}
\newcommand{\sbf}{{\bf S}}
\newcommand{\pom}{\partial\Omega}
\newcommand{\hm}{\omega}
\newcommand{\RNum}[1]{\uppercase\expandafter{\romannumeral #1\relax}}
\renewcommand{\emptyset}{\mbox{\textup{\O}}}
\DeclareMathOperator{\diam}{diam}
\begin{document}
\allowdisplaybreaks

\author{Simon Bortz}

\address{Simon Bortz,
	School of Mathematics, University of Minnesota, Minneapolis, MN
55455, USA}
	\email{bortz010@umn.edu} 
\author{Olli Tapiola}
\address{Olli Tapiola, Department of Mathematics, University of Missouri, Columbia, MO 65211, USA}
\email{olli.m.tapiola@gmail.com}

\thanks{The first author was supported by the NSF INSPIRE Award DMS-1344235. The second author was supported by Emil Aaltosen S\"a\"ati\"o through Foundations' Post Doc Pool grant.}
\title[$\eps$-Approximability in $L^p$ Implies Uniform Rectifiability]{$\eps$-Approximability of Harmonic Functions in $L^p$ Implies Uniform Rectifiability}
\subjclass[2010]{28A75, 28A78, 31B05, 42B37}
\begin{abstract}
Suppose that $\om \subset \RR^{n+1}$, $n \ge 2$, is an open set satisfying the corkscrew condition with an $n$-dimensional ADR boundary, $\pom$.
In this note, we show that if harmonic functions are $\eps$-approximable in $L^p$
for any $p > n/(n-1)$, then $\pom$ is uniformly rectifiable. Combining our results with those in \cite{HT} gives us a new characterization 
of uniform rectifiability which complements the recent results in \cite{HMM}, \cite{GMT} and \cite{AGMT}.
\end{abstract}

\maketitle

\section{Introduction}
The purpose of this note is to answer a question posed in \cite{HT}: If $E \subset \RR^{n+1}$ is an $n$-ADR set, does 
$\eps$-approximability of harmonic functions in $L^p$ for some fixed $p$ in $\RR^{n+1} \setminus E$ imply 
uniform rectifiability of $E$? We answer this question affirmatively with the following theorem.

\begin{theorem}\label{mainthrm.thrm} Let $\om \subset \ree$ be an open set with $n$-dimensional ADR boundary such 
that $\om$ satisfies the corkscrew condition. Let $p > n/(n-1)$ and suppose that every harmonic function 
is \emph{$\eps$-approximable in $L^p$} for every $\eps \in (0,1)$: there exist constants
$C_\eps$ and $C_p$ and a function $\varphi = \varphi^\eps \in BV_{\text{loc}}(\om)$ such that
$$\begin{cases}
\lVert N_*(u - \varphi) \rVert_{L^p(\pom, \sigma)} \le \eps C_p \lVert N_*u\rVert_{L^p(\pom, \sigma)},
\\ \lVert \C(\nabla \varphi) \rVert_{L^p(\pom, \sigma)} \le C_\eps C_p \lVert N_*u\rVert_{L^p(\pom, \sigma)}.
\end{cases}
$$
Then $\pom$ is uniformly rectifiable. Here $N_*u$ is the non-tangential maximal function (see Definition 
\ref{nontandef.def}) and 
$$\C(\nabla u)(x): = \sup_{r > 0} \frac{1}{r^n} \iint_{B(x,r) \cap \om} |\nabla \varphi| \, dY.$$
\end{theorem}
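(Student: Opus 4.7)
The plan is to extract from the $L^p$-$\eps$-approximability hypothesis a Carleson measure estimate (CME)
\begin{equation*}
\iint_{B(x,R)\cap\om}|\nabla u(Y)|^2\,\delta(Y)\,dY\lesssim R^n
\end{equation*}
for every bounded harmonic $u$ on $\om$ with $\|u\|_\infty\le 1$, every $x\in\pom$, and every $R>0$; once this is in hand, the equivalence of the CME with uniform rectifiability under the corkscrew plus ADR hypotheses, as embodied in \cite{HMM}, \cite{GMT}, and \cite{AGMT}, yields the conclusion. The broad philosophy is that $\eps$-approximability produces a $BV$ replacement $\varphi$ of a harmonic $u$ whose first-order gradient measure is Carleson-controlled, and this first-order control is enough to force the second-order CME on $u$ after an integration-by-parts reduction against $\delta$.

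The concrete steps I envisage: (i) Fix a surface ball $\Delta_0=B(x_0,R)\cap\pom$ and a bounded harmonic $u$ on $\om$ with $\|u\|_\infty\le 1$. Build a localized companion $u_0$, harmonic on $\om$, agreeing with $u$ on a sub-Carleson region over $\Delta_0$, uniformly bounded, and with $\lVert N_*u_0\rVert_{L^p(\pom,\sigma)}\lesssim\sigma(\Delta_0)^{1/p}$; this is done by harmonic extension of a localized trace, using only the corkscrew condition and ADR. (ii) Apply the hypothesis at a fixed small $\eps$ to obtain an approximator $\varphi_0$ satisfying both stated bounds. (iii) From the $L^p$-bound on $\C(\nabla\varphi_0)$, locate via Chebyshev a point $x_\star\in\Delta_0$ at which $\C(\nabla\varphi_0)(x_\star)$ is bounded, and deduce the first-order Carleson estimate
\begin{equation*}
\iint_{B(x_0,R)\cap\om}d|\nabla\varphi_0|\lesssim R^n.
\end{equation*}
(iv) Integrate by parts against $\delta$, using the identity $\div(u_0\nabla u_0\,\delta)=|\nabla u_0|^2\delta+u_0\nabla u_0\cdot\nabla\delta$ (valid since $\Delta u_0=0$), to reduce the target quadratic integral to a first-order integral of $|\nabla u_0|$ against a bounded weight; then replace one factor of $\nabla u_0$ by $\nabla(u_0-\varphi_0)+\nabla\varphi_0$, control the $\varphi_0$-contribution by (iii), and absorb the $(u_0-\varphi_0)$-contribution using the $L^p$-smallness of $N_*(u_0-\varphi_0)$ via a layer-cake / good-$\lambda$ argument on its level sets. (v) A covering of $\pom$ by surface balls yields the CME for arbitrary bounded harmonic $u$, whence uniform rectifiability.

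The main obstacle will be step (iv). The naive quadratic split $u_0=(u_0-\varphi_0)+\varphi_0$ cannot be carried out inside $\iint|\nabla u_0|^2\delta$ because $\varphi_0$ is only $BV$, so $|\nabla\varphi_0|^2$ is undefined; integration by parts is needed to trade one gradient for the $L^\infty$ bound $\|u_0\|_\infty\lesssim 1$ and reduce to a first-order quantity. Even then, the error term generated by $u_0-\varphi_0$ must be controlled by a volume gradient integral, which is not accessible directly from the $L^p(\pom,\sigma)$ non-tangential bound; a coarea / layer-cake passage through the level sets of $u_0-\varphi_0$ appears necessary, exploiting the control of $N_*(u_0-\varphi_0)$ on $\pom$. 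It is precisely here that the assumption $p>n/(n-1)$ becomes critical: this threshold is the scaling that matches the $n$-dimensional surface integral on $\pom$ with the $(n+1)$-dimensional $BV$ perimeter estimates in the interior via a Sobolev-type dimensional balance, and it is the range in which the required trace and maximal-function estimates close without loss. A secondary technical issue is the construction of $u_0$ in (i) under only the corkscrew assumption, without access to Harnack chains.
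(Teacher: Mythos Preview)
Your approach is genuinely different from the paper's, and step (iv) contains a real gap. After your integration by parts you are left with a term of the type $\iint_{B\cap\om} u_0\,\nabla u_0\cdot\nabla\delta$, and splitting one factor as $\nabla u_0=\nabla(u_0-\varphi_0)+\nabla\varphi_0$ forces you to control the total variation $\iint_{B\cap\om} d|\nabla(u_0-\varphi_0)|$. But the hypothesis gives you only $L^p(\pom,\sigma)$-smallness of $N_*(u_0-\varphi_0)$, which says nothing about the interior oscillation of $\varphi_0$: a $BV$ function can stay uniformly close to $u_0$ in every cone while having arbitrarily large variation. No layer-cake or coarea argument recovers this, because coarea converts $\iint d|\nabla(u_0-\varphi_0)|$ into perimeters of level sets of $u_0-\varphi_0$, and those perimeters are simply not governed by non-tangential $L^\infty$ bounds. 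A further integration by parts to move the derivative off $u_0-\varphi_0$ just recreates $\iint|\nabla u_0|$ on the other side and closes nothing.

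The paper sidesteps this entirely: it never tries to prove the CME directly. Instead it follows \cite{GMT} and shows that harmonic measure admits a Corona decomposition, which by \cite[Proposition~5.1]{GMT} gives uniform rectifiability. The key mechanism is to work with differences of \emph{averages} $|m_{V_Q^1}u-m_{V_Q^2}u|$ rather than gradients. These can be compared to $|m_{V_Q^1}\varphi-m_{V_Q^2}\varphi|$ with an error $O(\eps_0)$ coming only from the pointwise bound $|u-\varphi|\le\eps_0$ on $V_Q$ (extracted from $N_*(u-\varphi)$ via Chebyshev), and the $\varphi$-oscillation is then controlled by $\int_{V_Q}|\nabla\varphi|$ via Poincar\'e applied to $\varphi$ alone. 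The argument also requires a randomization over signs and Khintchine's inequality to pass from a single $u_Q$ to a packing estimate, and Lemma~\ref{Lem2.lem} is where the restriction $p>n/(n-1)$ actually enters, to bound $\|N_*u_a\|_{L^p}$ by $\sigma(R)^{1/p}$.
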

As usual, if $\varphi$ is not differentiable, $\iint_{(B(x,r) \cap \om} |\nabla \varphi| \, dY$ means the 
\emph{total variation of $\varphi$ in $B(x,r) \cap \om$},
\begin{align*}
  \iint_{B(x,r) \cap \om} |\nabla \varphi| \, dY 
  \coloneqq \sup_{\substack{\overrightarrow{\Psi} \in C_0^1(B(x,r) \cap \om) \\ \|\overrightarrow{\Psi}\|_{L^\infty(B(x,r) \cap \om) \le 1}}} \iint_{B(x,r) \cap \Omega} \varphi \, \text{div} \overrightarrow{\Psi} \, dY,
\end{align*}
and we denote $\varphi \in BV(\om)$ if the total variation over $\om$ is finite and $\varphi \in BV_{\text{loc}}(\om)$
if the total variation over any open relatively compact $\om' \subset \om$ is finite.

Our proof draws strongly on the ideas of \cite{GMT} and \cite{Hyt-Ros}. In particular, we will follow a central 
idea in \cite{GMT} and prove the following result:

\begin{theorem}
  \label{LpEpsimpUR.thrm}
  Suppose that the hypotheses of Theorem \ref{mainthrm.thrm} hold. 
  Then the harmonic measure $\omega$ admits a Corona decomposition (see Definition \ref{coronahm.def}). 
\end{theorem}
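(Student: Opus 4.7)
The plan is to adapt the stopping-time/Corona construction of \cite{GMT} to the $L^p$ setting, using the randomization strategy of \cite{Hyt-Ros}. Fix a Christ--David dyadic system $\dd$ on $\pom$ and a top cube $Q_0 \in \dd$. Let $X_{Q_0}$ be a corkscrew point for $Q_0$ and write $\omega = \omega^{X_{Q_0}}$. Run the standard stopping time: let $\F(Q_0)$ be the maximal subcubes $Q \subsetneq Q_0$ at which the density $\omega(Q)/\sigma(Q)$ first deviates from $\omega(Q_0)/\sigma(Q_0)$ by more than a factor $\eta$. On the tree between $Q_0$ and $\F(Q_0)$, $\omega$ is quantitatively comparable to $\sigma$, so verifying the Corona decomposition (Definition \ref{coronahm.def}) reduces to establishing the Carleson packing estimate
\[
\sum_{Q \in \F(Q_0)} \sigma(Q) \le (1 - \tau)\,\sigma(Q_0)
\]
for some fixed $\tau > 0$ independent of $Q_0$, after which one iterates.

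To obtain the packing, for each stopping cube $Q \in \F(Q_0)$ assign a sign $\varepsilon_Q \in \{\pm 1\}$ and form the test harmonic function
\[
u_{Q_0}(X) = \sum_{Q \in \F(Q_0)} \varepsilon_Q\, \omega^X(Q).
\]
By construction of the stopping family, $u_{Q_0}$ exhibits a jump of definite size between the corkscrew point $X_Q$ and a nearby point inside the parent of $Q$. Averaging over independent random choices of the signs (Khinchin) and exploiting the $n$-ADR structure, one controls $\|N_* u_{Q_0}\|_{L^p(\pom,\sigma)}$ by a multiple of $\sigma(Q_0)^{1/p}$; the restriction $p > n/(n-1)$ enters here, governing the pointwise-to-$L^p$ comparisons for the harmonic extensions $\omega^{(\cdot)}(Q)$. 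The hypothesis of Theorem~\ref{mainthrm.thrm} then furnishes $\varphi = \varphi^\eps$ with
\[
\|N_*(u_{Q_0} - \varphi)\|_{L^p(\pom,\sigma)} \le \eps\, C_p\, \sigma(Q_0)^{1/p}, \qquad \|\C(\nabla\varphi)\|_{L^p(\pom,\sigma)} \le C_\eps C_p\, \sigma(Q_0)^{1/p}.
\]
A pointwise argument near each $Q \in \F(Q_0)$ shows that the jump of $u_{Q_0}$ forces either $N_*(u_{Q_0}-\varphi)$ or $\C(\nabla\varphi)$ to exceed a fixed constant on a definite fraction of $Q$. Integrating over $Q_0$ and choosing $\eps$ small enough against $\eta$ yields the packing bound with $\tau > 0$ independent of $Q_0$.

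The main obstacle will be controlling $N_* u_{Q_0}$ uniformly in $Q_0$: the harmonic extensions $\omega^{X}(Q)$ decay only polynomially, so the tail contributions from distant stopping cubes must be trimmed to preserve the $\sigma(Q_0)^{1/p}$-scaling. In the $L^\infty$ theory of \cite{GMT} this scaling is automatic, but in $L^p$ one has to truncate the sum defining $u_{Q_0}$ (for instance, restricting attention to $Q \in \F(Q_0)$ lying within a fixed multiple of $Q_0$ and handling the far cubes by a separate covering argument) and carefully estimate the residual using the corkscrew condition and ADR property. A further technical nuisance is that $\varphi$ is only $BV_{\text{loc}}$, so the jump-comparison near each stopping cube must be formulated via the total variation measure of $\varphi$ rather than a pointwise gradient; this affects the bookkeeping in the lower bound on $\C(\nabla\varphi)$ but not the global stopping-time structure. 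Once the single-generation packing estimate is in hand, iteration on the stopping cubes (promoting them to new top cubes) produces the full Corona decomposition of $\omega$.
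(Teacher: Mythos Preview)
Your plan has the right ingredients (randomized signs, Khintchine, approximability, oscillation $\Rightarrow$ variation of $\varphi$), but the packing target is wrong and this is a genuine gap. From the jump argument you sketch one obtains, for the cubes $Q$ on which $|u-\varphi|$ is small on cones, only
\[
\sum_{Q} \sigma(Q) \ \lesssim\ \int_{CB_{Q_0}\cap\Omega}|\nabla\varphi|\ \lesssim\ C_{\eps}\,\sigma(Q_0),
\]
with $C_\eps\to\infty$ as $\eps\to 0$; there is no mechanism in your outline that produces a constant strictly below $1$. Since the cubes of a single stopping generation are already disjoint in $Q_0$, the trivial bound $\sum_Q\sigma(Q)\le\sigma(Q_0)$ is at least as good, and iterating it gives nothing. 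In short, a one-generation estimate followed by iteration cannot close.

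The paper's proof avoids this by packing \emph{all} generations of low-density cubes $\bigcup_{k=0}^m \mathrm{LD}^k(R)$ at once and running an absorption argument on
\[
\A(m):=\sup_{R\in\dd}\ \frac{1}{\sigma(R)}\sum_{k=0}^{m}\ \sum_{Q\in\mathrm{LD}^k(R)}\sigma(Q).
\]
Two features make this work. First, the Whitney balls $V_Q$ have bounded overlap over \emph{all} dyadic $Q$ (not just one generation), so after replacing $u_a$ by $\varphi_a$ via Poincar\'e one gets $\sum_Q\int_{V_Q}|\nabla\varphi_a|\lesssim\int_{CB_R\cap\Omega}|\nabla\varphi_a|\lesssim C_{\eps_1}\sigma(R)$ across the full family. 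Second, the cubes that miss the ``good'' set where $N_*(u_a-\varphi_a)$ is small are handled not by a pointwise bound but by maximality: their top cubes occupy at most $\gamma\sigma(R)$, and everything below them is controlled by $\gamma\,\A(m)$. This yields $\A(m)\lesssim C_{\eps_1}+\eps_0\A(m)+\gamma\A(m)$, and choosing $\gamma,\eps_0$ small absorbs the last two terms. The finiteness $\A(m)\le m+1$ is what permits the absorption; your inductive scheme never reaches a finite quantity to absorb into.

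Two smaller remarks. Your worry about ``tail contributions from distant stopping cubes'' is misplaced: all $Q\in\F(Q_0)$ lie in $Q_0$, so $\big|\sum_Q\varepsilon_Q 1_Q\big|\le 1_{Q_0}$ and Lemma~\ref{Lem2.lem} gives $\|N_*u_{Q_0}\|_{L^p}\lesssim\sigma(Q_0)^{1/p}$ with no truncation needed. And rather than the raw $\omega^X(Q)$, the paper (following \cite{GMT}) uses the functions $u_Q$ of Lemma~\ref{Lem3.3AGMT.lemma}, built from $f_Q\le 1_{E_Q}$ with $E_Q=Q\setminus L_Q$, which come with a guaranteed oscillation $|m_{V_Q^1}u_Q-m_{V_Q^2}u_Q|\ge c_1$ between two concentric balls at the corkscrew scale; this is what feeds cleanly into Poincar\'e for $\varphi_a$.
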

By \cite[Proposition 5.1]{GMT}, Theorem \ref{LpEpsimpUR.thrm} is enough to imply Theorem \ref{mainthrm.thrm}.

We provide some context to the results herein. There has been significant and continued interest in characterizations of 
geometric properties by properties of solutions to elliptic PDEs and/or elliptic measure. In light of this, 
a fundamental question at the interface of harmonic analysis and geometric measure theory is the following: 
what PDE properties serve to characterize uniform rectifiability of the boundary of an open set with ADR boundary? 
Recently, powerful tools from harmonic analysis and geometric measure theory have provided several characterizations 
\cite{HMM, GMT}. Among these is the following theorem.
\begin{theorem}[\cite{GMT,HMM}]
Let $\om \subset \ree$ be a domain satisfying the interior corkscrew condition with $n$-dimensional ADR boundary, $
\pom$. Then $\pom$ is uniformly rectifiable if and only if bounded harmonic functions in $\om$ are $\eps$-approximable 
(see Definition \ref{defin:eps_app}).
\end{theorem}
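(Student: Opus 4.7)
The plan is to prove the two implications separately, as they come from distinct sources.

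For necessity (UR $\Rightarrow$ bounded $\eps$-approximability), I would leverage a bilateral Corona decomposition of $\pom$ into ``flat'' sub-regions whose tops satisfy a Carleson packing condition, a known consequence of uniform rectifiability. On each flat piece, $\om$ is well approximated (in the sense of big pieces) by a chord-arc subdomain, where bounded harmonic functions admit classical $\eps$-approximation via Varopoulos' original construction. One then pastes together the local approximants on the flat sawtooth regions to obtain the global $\varphi$; the Carleson norm of $|\nabla \varphi|$ receives contributions from (i) within-sawtooth transitions (controlled locally by the Varopoulos-type bound) and (ii) between-sawtooth transitions across the tops (absorbed by the Carleson packing of those tops).

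For sufficiency (bounded $\eps$-approximability $\Rightarrow$ UR), I would follow the same strategy as in Theorem \ref{LpEpsimpUR.thrm} but with $L^\infty$ data: show that the harmonic measure $\omega$ admits a Corona decomposition and invoke \cite[Proposition 5.1]{GMT} to conclude. Fix a boundary cube $Q_0$ and run a stopping-time argument to extract a maximal family $\F$ of subcubes where $\omega$ first exhibits quantitative non-flatness (e.g.\ fails a $\beta$- or $\alpha$-number condition at a threshold). Test the $\eps$-approximability hypothesis against $u(X) = 2\omega^X(E) - 1$ for a Borel set $E \subset \pom$ engineered so that $u$ transitions sharply from $\approx +1$ to $\approx -1$ between Whitney boxes on opposite sides of each top $Q \in \F$. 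Since $\|u\|_\infty \le 1$ but the $\eps$-approximant $\varphi$ must track $u$ to within $\eps$, $\varphi$ is forced to transition at each top, yielding
\[
\iint_{B(\hx, c\ell(Q)) \cap \om} |\nabla \varphi|\, dY \gtrsim \ell(Q)^n \sim \sigma(Q).
\]
Against the Carleson bound $\|\C(\nabla \varphi)\|_\infty \le C_\eps$ this gives the packing $\sum_{Q \in \F} \sigma(Q) \lesssim \sigma(Q_0)$, which is exactly the hypothesis of a Corona decomposition of $\omega$.

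The main obstacle is the sufficiency direction, specifically arranging a uniformly quantitative gradient lower bound for $\varphi$ across \emph{all} tops in $\F$: the test function $u$ must exhibit near-$\pm 1$ values inside Whitney boxes straddling every top, uniformly in the stopping-time family. This requires combining the stopping-time non-flatness extraction with nontangential convergence for bounded harmonic functions and a careful choice of the witness set $E$, rather than a naive single-function test which would only furnish qualitative separation. On the necessity side, the subtle point is that a naive gluing destroys the Carleson control on $|\nabla \varphi|$; the full bilateral Corona decomposition (with both Whitney-scale flatness and Carleson-packed tops) is needed to absorb the cross-piece transitions into the Carleson bound.
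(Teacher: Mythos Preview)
This theorem is not proved in the paper; it is quoted from \cite{HMM} (necessity) and \cite{GMT} (sufficiency), and the paper's own argument is the $L^p$ analogue in Theorem~\ref{LpEpsimpUR.thrm}, which adapts \cite{GMT}. Your necessity sketch is broadly consonant with \cite{HMM}. Your sufficiency sketch, however, departs from \cite{GMT} (and from the paper's adaptation of it) in two ways that are genuine gaps.

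First, the stopping-time family is not chosen via a geometric $\beta$- or $\alpha$-number threshold: the Corona decomposition of Definition~\ref{coronahm.def} is a decomposition for \emph{harmonic measure}, and the stopping rule is the \emph{low-density} condition $\omega^{P_R}(Q)/\sigma(Q)\le\delta\,\omega^{P_R}(R)/\sigma(R)$ (the families $\text{LD}^k(R)$). No flatness information is available a priori, so $\beta$-number stopping would be circular. Second, a single test function $u=2\omega^X(E)-1$ engineered to oscillate at \emph{every} stopping cube is not what \cite{GMT} builds and is not obviously feasible: the cubes in $\bigcup_k\text{LD}^k(R)$ are nested, so prescribing $u\approx\pm1$ in a parent's Whitney boxes will generally conflict with the requirements at its descendants; moreover, in this one-sided setting there need not be Whitney boxes on ``opposite sides'' of a cube. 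The actual mechanism is Lemma~\ref{Lem3.3AGMT.lemma}: for each $Q$ one produces a separate $u_Q=\int f_Q\,d\omega^X$ with $0\le f_Q\le 1_{E_Q}$ on pairwise disjoint sets $E_Q$, oscillating between two balls $V_Q^1,V_Q^2$ on the \emph{same} side of $\partial\Omega$, and then combines them by random signs and Khintchine's inequality as in \eqref{khintchine.estimate}. That randomization is precisely what converts per-cube oscillation into the packing estimate \eqref{LDpackcl.eq1}; your single-function proposal omits it and supplies no substitute.
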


The direction uniform rectifiability implies $\varepsilon$-approximability was proved in \cite{HMM}, and the converse in \cite{GMT}.

A remarkable thing about this theorem, is that it does not have any assumptions on the connectivity of $\om$ or $\pom$. For a long time, giving up connectivity assumptions was a serious obstacle in the field since uniform rectifiability is not 
enough to imply absolute continuity of the elliptic measure with respect to the surface measure \cite{BJ}. Without 
absolute continuity, analyzing the properties of the solutions to elliptic PDE becomes considerably more difficult.

Let us recall the definition of the usual $L^\infty$ type $\eps$-approximability:
\begin{definition}
  \label{defin:eps_app}
  Suppose that $\om \subset \ree$ is a domain satisfying the interior corkscrew condition with $n$-dimensional ADR  
boundary $\pom$  (see Definition \ref{defadr}) and let $\eps \in (0,1)$. We say that a function $u$ is $\eps$-
approximable if there exists a constant $C_\eps$ and 
  a function $\varphi = \varphi^\eps \in BV_{\text{loc}}(\Omega)$ satisfying
  \begin{align*}
    \|u-\varphi\|_{L^\infty(\Omega)} < \eps \ \ \ \ \ \text{ and } \ \ \ \ \ \sup_{x \in E, r > 0} \frac{1}{r^n} \iint_{B(x,r) \cap 
\Omega} |\nabla \varphi(Y)| \, dY \le C_\eps.
  \end{align*}
\end{definition}
The notion of $\eps$-approximability was first introduced by Varopoulos who proved that every harmonic function in $\RR^{n+1}_+$ is 
$\eps_0$-approximable for some $\eps_0 \in (0,1)$ \cite{V}. He used the property to prove the so called Varopoulos extension 
theorem which gives an alternative characterization of $BMO$ functions. In his work, it was not necessary to have the 
approximability property for all $\eps \in (0,1)$ but in later developments a sharper version of the property has been 
crucial. Garnett \cite{G} extended Varopoulos's result for all $\eps \in (0,1)$ and his result in turn was generalized 
to bounded Lipschitz domains by Dahlberg \cite{D}. The property has been used to e.g. explore the absolute continuity properties of elliptic measures\footnote{See also \cite{G}, where similar ideas were introduced in the formulation and proof of a quantitative Fatou theorem.} \cite{KKPT, HKMP}  and, 
as stated above, give a new characterization of uniform rectifiability \cite{HMM, GMT}. The $L^p$ version of $\eps$-approximability 
was recently introduced by Hyt\"onen and Ros\'en \cite[Theorem 1.3]{Hyt-Ros} who showed 
that any weak solution to certain elliptic partial differential equations in $\ree_+$ are $\eps$-approximable in 
$L^p$ for every $\eps \in (0,1)$ and every $p \in (1,\infty)$. More recently, Steve Hofmann and the second author 
proved an analog of \cite[Theorem 1.3]{Hyt-Ros} for harmonic functions in the rougher setting of a domain with uniformly rectifiable boundary 
which satisfies the interior corkscrew condition. We note that the result in \cite{HT} is 
stated for the complement of an ADR set and in that setting the interior corkscrew condition is automatically satisfied.
However, it is straightforward to show that the result holds in the current context, too.

Combining our main result with results in \cite{HMM, HMM2, GMT} and \cite{HT} gives us the following characterization 
result:
\begin{theorem}\label{theorem_conditions}
  Let $\om \subset \ree$, $n \ge 2$, be an open set with $n$-dimensional ADR boundary such that $\om$ satisfies the corkscrew condition.
  Then the following conditions are equivalent.
  \begin{enumerate}
    \item[(a)] $\pom$ is uniformly rectifiable.
    \item[(b)] Every bounded harmonic function $u$ satisfies the following Carleson measure estimate:
               \begin{align*}
                 \sup_{x \in \pom, r > 0} \frac{1}{r^n} \iint_{B(x,r) \setminus \pom} |\nabla u(Y)|^2 \dist(Y,\pom) \, dY \lesssim \|u\|_{L^\infty(\om)}^2.
               \end{align*}
    \item[(c)] For any fixed $p \in [2,\infty)$, every function $u \in C_0(\overline{\om})$ that is harmonic in $\om$ satisfies 
               the following $S \lesssim N$ estimate:
               \begin{align*}
                 \|Su\|_{L^p(\pom)} \le C_p \|N_* u\|_{L^p(\pom)}.
               \end{align*}
    \item[(d)] Every bounded harmonic function on $\om$ is $\eps$-approximable for all $\eps > 0$.
    \item[(e)] For any fixed $p > n/(n-1)$, every harmonic function on $\om$ is $\eps$-approximable in $L^p$ for all $\eps > 0$.
  \end{enumerate}
\end{theorem}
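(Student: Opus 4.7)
The plan is to exhibit Theorem \ref{theorem_conditions} as a consequence of Theorem \ref{mainthrm.thrm} together with a cycle of known equivalences from the literature; only the implication (e)$\Rightarrow$(a) is genuinely new, the rest being an exercise in locating the reference that supplies each implication.

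First I would record (a)$\Leftrightarrow$(d), which is the combination of UR $\Rightarrow$ $\eps$-approximability from \cite{HMM} and the converse from \cite{GMT}, as already noted in the introduction. For (a)$\Leftrightarrow$(b), the forward direction is the Carleson measure estimate for bounded harmonic functions on open sets with UR boundary, proved in \cite{HMM2}; the converse is obtained in \cite{GMT}, where in fact the Carleson estimate (b) enters as an intermediate step in the implication $\eps$-approximability$\Rightarrow$UR. For (a)$\Leftrightarrow$(c), the $L^p$ bound $\|Su\|_{L^p}\lesssim\|N_*u\|_{L^p}$ follows from the Carleson measure estimate (b) via a standard good-$\lambda$ argument on non-tangential cones, applied to $C_0(\overline{\om})$ data; conversely, the $S\lesssim N$ estimate (c) implies the Carleson measure condition (b) through the usual localization and self-improvement, so that \cite{GMT} applies.

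Next, (a)$\Rightarrow$(e) is the main result of \cite{HT}: although stated there for the complement of an ADR set, the proof is local in nature and adapts without change to the current setting, where the interior corkscrew condition is assumed separately (as remarked in the introduction). The implication (e)$\Rightarrow$(a) is precisely Theorem \ref{mainthrm.thrm} of this note, whose proof is reduced via \cite[Proposition 5.1]{GMT} to the Corona decomposition statement of Theorem \ref{LpEpsimpUR.thrm}.

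The only real obstacle lies in Theorem \ref{mainthrm.thrm} itself; the cycle described above amounts to citation bookkeeping. There, the hard part will be to adapt the Corona construction of \cite{GMT}, which exploits pointwise control of the form $\|u-\vp\|_{L^\infty}<\eps$, to the weaker $L^p$ hypothesis provided by (e). This is where the techniques of \cite{Hyt-Ros} enter: stopping-time arguments for the gradient measure $|\nabla\vp|$ must now be carried out with only integral rather than uniform control of the approximator, which forces a more delicate handling of the error terms at stopping scales.
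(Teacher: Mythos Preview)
Your proposal is correct and mirrors the paper's treatment: Theorem \ref{theorem_conditions} is not given a separate proof there but is simply stated as the result of combining Theorem \ref{mainthrm.thrm} with the results of \cite{HMM, HMM2, GMT, HT}, exactly the citation bookkeeping you outline. Your final paragraph rightly locates the real content in Theorem \ref{LpEpsimpUR.thrm}; the paper's actual mechanism for passing from $L^p$ to pointwise control is Chebyshev (yielding a large set $F(R,a)\subset R$ on which $N_*(u_a-\varphi_a)\le\eps_0$) together with Khintchine's inequality over random $\pm 1$ sign sequences as in \cite{Hyt-Ros}, rather than a stopping-time argument on $|\nabla\varphi|$ per se.
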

We remark that at least the conditions (a), (b) and (d) remain equivalent for if we assume the conditions both for certain 
types of elliptic operators and their adjoints as was proven in \cite{AGMT}. The methods we use to prove 
Theorem \ref{mainthrm.thrm} should extend to the class elliptic operators considered in \cite{AGMT}, but we choose to treat 
the case of the Laplacian for the sake of brevity. 
We note that in the proof of Theorem \ref{mainthrm.thrm} we need the approximability property only for some sufficiently small 
$\eps_1 \in (0,1)$ depending on $p$ and the structural constants. The restriction $p > n/(n-1)$ is due 
to the proof of Lemma \ref{Lem2.lem}; a suitable version of the Serrin-Weinberger theory \cite{SW} would allow one to reach $p =  n/(n-1)$ (see \cite[Proposition 5.1]{Hyt-Ros}).

\section{Preliminaries and Definitions}

\begin{definition}[ADR (Ahlfors-David regular) sets]\label{defadr}
We say that a  set $E \subset \ree$, of Hausdorff dimension $n$, is \emph{ADR}
if it is closed, and if there is some uniform constant $C$ such that
\begin{equation} \label{eq1.ADR}
\frac1C\, r^n \leq \sigma\big(\Delta(x,r)\big)
\leq C\, r^n,\quad\forall r\in(0,\diam (E)),\ x \in E,
\end{equation}
where $\diam(E)$ may be infinite.
Here, $\Delta(x,r):= E\cap B(x,r)$ is the ``surface ball" of radius $r$,
and $\sigma:= H^n|_E$ 
is the ``surface measure" on $E$, where $H^n$ denotes $n$-dimensional
Hausdorff measure.
\end{definition}

\begin{definition}[UR]
  \label{defin:ur}
  Following \cite{DS1, DS2}, we say that an ADR set $E \subset \RR^{n+1}$ is \emph{UR} (uniformly rectifiable) 
  if it contains ``big pieces of Lipschitz images'' (BPLI) of $\RR^n$:
  there exist constants $\theta, M > 0$ such that for every $x \in E$ and $r \in (0,\diam(E))$ there is a Lipschitz mapping
  $\rho = \rho_{x,r} \colon \RR^n \to \RR^{n+1}$, with Lipschitz norm no larger that $M$, such that
  \begin{align*}
    H^n(E \cap B(x,r) \cap \rho(\{y \in \RR^n \colon |y| < r\})) \ge \theta r^n.
  \end{align*}
\end{definition}

\begin{definition}[Corkscrew Condition]\label{CS.def}
We say that an open set $\om \subset \ree$ satisfies the \emph{corkscrew condition} if there exists a constant $M$ such that for 
every $x \in \pom$ and $r\in(0,\diam(\pom))$ there exists a point $Y \in \om$ such that $B(Y, \tfrac{r}{M}) \subset B(x,r) 
\cap \om$. If $Y$ is as above we say $Y$ is a corkscrew point relative to $x$ at scale $r$. 
\end{definition}

Our standing assumption will be that $\om \subset \ree$ is an open set with $n$-dimensional ADR 
boundary $\pom$ and that $\om$ satisfies the corkscrew condition with constant $M$. Let us fix some notation:
\begin{enumerate}
  \item[$\bullet$] We will use lowercase letters $x$, $y$, $z \ldots$ to denote points on $\pom$ and capital letters 
                   $X, Y, Z, \ldots$ to denote generic points in $\RR^{n+1}$.
                   
  \item[$\bullet$] We use the standard notation $B(X,r)$ for usual Euclidean balls in $\RR^{n+1}$ and $\Delta(x,r) \coloneqq B(x,r) \cap \pom$
                   for surface balls. As usual, we use the notation
                   $\kappa B(X,r) \coloneqq B(X,\kappa r)$ and $\kappa \Delta(x,r) \coloneqq \Delta(x,\kappa r)$ for the
                   dilations of the balls.
                   
  \item[$\bullet$] We set $\delta(X) \coloneqq \dist(X,\pom)$ for every $X \in \om$.
  
  \item[$\bullet$] We let $\hm = \hm^X$ to denote the harmonic measure for $\om$.
\end{enumerate}

We will use actively well-known dyadic techniques on $\pom$. We note that in our context there is no canonical choice for 
the dyadic system but it is not necessary to know the exact structure of the ``cubes''.

\begin{lemma}[Dyadic systems for ADR sets]\label{lemmaCh}
Suppose that $E\subset \ree$ is closed $n$-dimensional ADR set.  Then there exist
constants $a_0>0$ and $a_1<\infty$, depending only on dimension and the
ADR constant, such that for each $k \in \mathbb{Z}$ there is a collection of Borel sets (``cubes'')
$$
\mathbb{D}_k:=\{Q_{j}^k\subset E: j\in \mathfrak{I}_k\},$$ where
$\mathfrak{I}_k$ denotes some (possibly finite) index set depending on $k$, satisfying

\begin{enumerate}
\item $E=\cup_{j}Q_{j}^k\,\,$ for each $k\in{\mathbb Z}$ and the union is disjoint.

\item If $m\geq k$ then either $Q_{i}^{m}\subset Q_{j}^{k}$ or $Q_{i}^{m}\cap Q_{j}^{k}=\emptyset$.

\item For each $(j,k)$ and each $m<k$, there is a unique $i$ such that $Q_{j}^k\subset Q_{i}^m$.

\item For each $(j,k)$ there exists a point $x_j^k \in Q_j^k$ such that
      \begin{align*}
        \Delta(x_j^k, a_0 2^{-k}) \subset Q_j^k \subset \Delta(x_j^k, a_1 2^{-k}) \coloneqq B(x_j^k,a_1 2^{-k}) \cap E.
      \end{align*}
\end{enumerate}
\end{lemma}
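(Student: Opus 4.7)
The lemma is a classical dyadic decomposition result for geometrically doubling (and in particular ADR) sets, going back to M.~Christ and G.~David, with a sharp modern formulation by Hyt\"onen--Kairema. The plan is to mimic Christ's construction adapted to our ADR set $E$.

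First, for each $k \in \ZZ$ I would choose a maximal $2^{-k}$-separated net $X_k = \{x_j^k\}_{j \in \mathfrak{I}_k} \subset E$; that is, $|x_i^k - x_j^k| \ge 2^{-k}$ for $i \ne j$, while every point of $E$ lies within $2^{-k}$ of some $x_j^k$. The ADR condition forces $\mathfrak{I}_k$ to be locally finite. Using Zorn's lemma (or a direct induction in the separable case) one can further arrange the nets to be nested, $X_k \subset X_{k+1}$, by extending a given $2^{-k}$-net to a maximal $2^{-(k+1)}$-net. This nesting endows each $x_j^{k+1}$ with a well-defined \emph{parent} $x_{\pi(j)}^k$ in $X_k$, chosen as a nearest element of $X_k$ with a fixed tie-breaking rule (e.g.\ by well-ordering $\bigcup_k \mathfrak{I}_k$).

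Next I would define the cubes inductively from coarse to fine scales. Tentatively, assign each $y \in E$ at scale $k$ to $\widetilde Q_j^k$ whenever $x_j^k$ is the nearest center in $X_k$ (with ties broken consistently). Then refine: declare $y \in Q_j^{k+1}$ iff $y \in \widetilde Q_j^{k+1}$ \emph{and} $y \in Q_{\pi(j)}^k$. This inherited refinement makes conditions (1), (2) and (3) of the lemma immediate. The outer inclusion in (4), namely $Q_j^k \subset \Delta(x_j^k, a_1 2^{-k})$, then follows from the maximality of $X_k$ together with a geometric series estimate bounding how much the downward inheritance may push points away from their nearest center; one obtains $a_1$ depending only on the ADR/doubling constant and the scale ratio $1/2$.

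The delicate step, and the main obstacle, is the inner inclusion $\Delta(x_j^k, a_0 2^{-k}) \subset Q_j^k$. A single-scale Voronoi partition gives a two-sided sandwich trivially, but once one insists on the nested tree structure (2) one must prove that the downward refinement never chisels away the protected ball around $x_j^k$. The plan is to choose $a_0$ small enough (depending only on the scale ratio $1/2$ and the doubling constant) so that any point of $\Delta(x_j^k, a_0 2^{-k})$ remains strictly closer to $x_j^k$ than to any competing center at every finer scale, and moreover lies in the interior of each tentative ancestor cube. Quantitatively, competing centers in $X_m$ for $m \ge k$ live at distance at least $2^{-m}$ from one another, so a geometric-series argument starting from $a_0 2^{-k}$ and summing the contributions $a_0 2^{-m}$ over $m \ge k$ shows that the protected ball is never invaded, provided $a_0$ is strictly smaller than a dimensional constant times the gap between successive scales. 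This is the bulk of the technical work; once it is carried out, the four asserted properties are all established.
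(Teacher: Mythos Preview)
The paper does not prove this lemma at all: it is stated without proof and immediately followed by the remark that ``in the literature, there exist numerous proofs for this result with many additional properties (see e.g.\ \cite{DS1,DS2,Ch,HK,HMMM}).'' So there is nothing to compare against; your outline is essentially a sketch of the Christ/Hyt\"onen--Kairema construction that the paper is citing.

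As for the sketch itself, the overall architecture is correct, but the one place you flag as delicate --- the inner-ball inclusion --- is also the one place where your argument is too vague to stand as written. The claim that ``a geometric-series argument \dots shows that the protected ball is never invaded, provided $a_0$ is strictly smaller than a dimensional constant times the gap between successive scales'' is not how the standard proofs proceed, and it is not clear it works as stated: once you impose nestedness by inheriting from parents, a point $y$ near $x_j^k$ could in principle be assigned at some coarser scale $m<k$ to a different center, and then your downward-inheritance rule would exclude it from $Q_j^k$ regardless of how close it is to $x_j^k$. Christ's original argument handles this with a probabilistic/randomized labeling to obtain the small-boundary property (and the inner ball then follows), while Hyt\"onen--Kairema use a two-parameter separation scheme and a careful deterministic labeling of reference points to push the inner-ball property through the inheritance. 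Either route requires real work beyond the geometric-series heuristic you indicate; if you want a self-contained proof you should follow one of these explicitly.
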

In the literature, there exist numerous proofs for this result with many additional properties (see e.g. \cite{DS1,DS2,Ch,HK,HMMM}) but we listed only
the ones that we actually need in this paper. Let us fix some notation related to the dyadic system.
\begin{enumerate}
  \item[$\bullet$] If the set $E$ is bounded, then we simply have $Q_j^k = E$ for all sufficiently small $k \in \mathbb{Z}$.
                   Because of this, we denote $\dd \coloneqq \bigcup_k \dd_k$, where the union runs over the $k$ such that 
                   $2^{-k} \lesssim  {\rm diam}(E)$.
                   
  \item[$\bullet$] For each $Q \coloneqq Q_j^k \in \dd$ we denote
                   \begin{align*}
                     x_Q \coloneqq x_j^k, \qquad B_Q \coloneqq B(x_Q,a_1 2^{-k}), \qquad \Delta_Q \coloneqq B_Q \cap E.
                   \end{align*}
                   We shall refer to the point $x_Q$ as the ``center'' of $Q$.
                   
  \item[$\bullet$] For each $Q \coloneqq Q_j^k \in \dd$, we set $\ell(Q) \coloneqq 2^{-k}$.
                   We refer to this quantity as the ``side length'' of $Q$. Since we assume that $\ell(Q) \lesssim \diam(E)$
                   for every $Q \in \dd$, we have $\ell(Q)\approx \diam(Q)$ and $\ell(Q)^n \approx \sigma(Q)$.
  
  \item[$\bullet$] For each $R \in \dd$, we let $\dd_R \subset \dd$ be the collection of the subcubes of $R$.
\end{enumerate}

Next, we define ``dyadic'' Whitney regions similar to those in \cite{HM-UR1}: for each $Q \in \dd$ we choose a bounded number 
of usual Whitney cubes $I_Q^i \subset \om$ such that $\bigcup_i I_Q^i$ acts as a substitute for a region of the 
type $P \times (\ell(P)/2, \ell(P)]$, the standard Whitney region in the upper half-space. Our Whitney regions are not disjoint but they 
have a bounded overlap property which is enough for us. In \cite{HM-UR1,HMM,HT}, it was crucial to fatten the cubes $I_Q^i$ 
to ensure that most of the regions break into components with strong geometric properties (see particularly 
\cite[Lemma 3.24]{HMM}). In our situation, we do not have to fatten the cubes but we note that we have to make sure that the Whitney 
regions are large enough for the non-tangential maximal function we use (see Definition \ref{nontandef.def})
to be equivalent in $L^p$ sense with the non-tangential maximal function used it \cite{HT}. See \cite[Lemma 1.23]{HT} and its proof 
for more details about this and corresponding Fefferman-Stein \cite{FS} type arguments.

Let us be more precise. Let $\eta \ll 1 \ll K$ be parameters depending on $n$, $M$ and the ADR constant whose values we will determine 
later in the proofs. Suppose that $\W \coloneqq \{I\}_I$ is a Whitney decomposition of $\om$, that is, $\{I\}_I$ is a collection of 
closed $(n+1)$-dimensional Euclidean cubes whose interiors are disjoint such that $\bigcup_I I = \ree \setminus \pom$ and
\begin{equation*}
4 \diam(I)\leq
\dist(4I,\pom)\leq \dist(I,\pom) \leq 40\diam(I)\,,\qquad \forall\, I\in \mathcal{W}
\end{equation*}
and 
$$1/4 \diam(I_1) \le \diam(I_2) \le 4 \diam(I_1)$$
whenever $I_1 \cap I_2 \neq \emptyset$. For every $Q \in \dd(\pom)$ we set
$$W_Q(\eta, K) = \{I \in \W: \eta^{1/4} \ell(Q) \le \ell(I) \le K^{1/2}\ell(Q), \dist(I,Q) \le K^{1/2} \ell(Q)\}.$$
and
$$U_Q(\eta,K) = \bigcup_{I \in W_Q(\eta, K)} I.$$
We call $U_Q(\eta,K)$ the Whitney region relative to $Q \in \dd$.

\begin{remark}\label{choiceEtaK.rmk}
We put some initial restrictions on the parameters $\eta$ and $K$ to ensure that $U_Q(\eta,K)$ is non-empty for every 
$Q \in \dd$. We notice that by the corkscrew condition for every $Q \in \dd$ there exists a corkscrew 
point $X_Q$ such that $|X_Q - x_Q| < \ell(Q)$ and $\dist(X_Q, \pom) > M^{-1} \ell(Q)$. It follows that $X_Q \in I$ for 
some $I \in \W$ with $$\dist(I,Q) \approx \ell(Q).$$ Choosing $\eta \ll 1 \ll K$ depending on the corkscrew condition and 
$n$ we obtain that $U_Q(\eta,K) \neq \emptyset$. We later impose further assumptions on $\eta,K$, but these 
will continue to only depend on $n$, the ADR constant and the corkscrew condition. We will drop the $\eta, K$ from 
$U_Q(\eta,K)$ for notational convenience.
\end{remark}

Finally, let us define cones and the non-tangential maximal operator. We first denote the usual cone 
of aperture $\alpha > 1$ at $x \in \pom$ by $\widetilde{\Gamma}_\alpha(x) \coloneqq \{Y \in \om \colon |x-Y| < \alpha \cdot \delta(Y)\}$.
We notice that if $\pom$ is bounded, then we have $\RR^{n+1} \setminus B(x,R) \subset \widetilde{\Gamma}_\alpha(x)$ for 
large enough $R$ and every $x \in \pom$. Thus, since we only construct the regions $U_Q$ for such $Q$ that $\ell(Q) \lesssim \diam(\pom)$,
we set
\begin{align*}
  \Gamma(x) \coloneqq \left\{ \begin{array}{cl}
                                \bigcup_{Q \in \dd, Q \ni x} U_Q, &\text{ if } \diam(\pom) = \infty \\
                                \bigcup_{Q \in \dd, Q \ni x} U_Q \cup (\RR^{n+1} \setminus B(x,R_0)), &\text{ if } \diam(\pom) < \infty
                              \end{array} \right. ,
\end{align*}
for a fixed large enough $R_0$. It is straightforward to verify that by choosing the constants $\eta$ and $K$ in a suitable 
way, there exist $\alpha _1 > \alpha_0 > 1$ such that $\widetilde{\Gamma}_{\alpha_0}(x) \subset \Gamma(x) \subset \widetilde{\Gamma}_{\alpha_1}(x)$ 
for every $x \in \pom$.

\begin{definition}[Non-tangential maximal function]\label{nontandef.def}
For any function $g: \om \to \re$ we define the non-tangential maximal function $N_*g : \pom \to \re$ by
$$N_*g(y):= \sup_{X \in \Gamma(y)} |g(X)|.$$
\end{definition}

\section{Two lemmas}

In order to present the proof of Theorem \ref{LpEpsimpUR.thrm} we prove two simple lemmas. 

\begin{lemma}\label{Lem1.lem}
Let $Q \in \dd$. Suppose that $f$ is a Borel function such that $|f| \le 1_Q$ and set
$u(X) = \int_{\pom} f(y) \, d\hm^X(y)$ for every $X \in \om$. Then
$$|u(X)| \lesssim 1_{4B_Q}(X) + 1_{(4B_Q)^c}(X) \left(\frac{\ell(Q)}{|X - x_Q|} \right)^{n -1},$$
where the implicit constants depend on $n$ and the ADR constant. 
\end{lemma}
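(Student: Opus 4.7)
The plan is to reduce the estimate to the standard decay of harmonic measure evaluated at a far-away point. Since $|f|\le 1_Q\le 1_{\Delta_Q}$ where $\Delta_Q := B_Q\cap\pom$, positivity of $\hm^X$ gives $|u(X)|\le \hm^X(\Delta_Q)$. The trivial bound $\hm^X(\Delta_Q)\le 1$ immediately handles $X\in 4B_Q$, so it remains to show that for $X\in\om\setminus 4B_Q$,
\[ \hm^X(\Delta_Q)\ \lesssim\ \left(\frac{\ell(Q)}{|X-x_Q|}\right)^{n-1}. \]

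I would establish this inequality by a barrier argument. The starting point is that, because our standing assumption is $n\ge 2$ (hence $n+1\ge 3$), the function $|Y-x_Q|^{1-n}$ is (a constant multiple of) the fundamental solution of the Laplacian on $\RR^{n+1}$, and so it is positive and harmonic on $\RR^{n+1}\setminus\{x_Q\}\supset\om$. Set $v(Y):=\hm^Y(\Delta_Q)$ and $w(Y):=C(\ell(Q)/|Y-x_Q|)^{n-1}$; both functions are harmonic on $\om\setminus\overline{2B_Q}$. On $\pom\setminus 2B_Q$ one has $v\equiv 0$ and $w\ge 0$; on $\om\cap\partial(2B_Q)$ the value $w(Y)=C(2a_1)^{1-n}$ is constant in $Y$, and choosing $C:=(2a_1)^{n-1}$ ensures $w\ge 1\ge v$ there. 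If $\diam(\pom)<\infty$, the maximum principle on the bounded open set $\om\setminus\overline{2B_Q}$ immediately yields $v\le w$ throughout, which contains the desired bound on $\om\setminus 4B_Q$.

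The main obstacle is the case $\diam(\pom)=\infty$, in which one must control $v-w$ at spatial infinity before invoking the maximum principle. The slickest remedy is probabilistic: $v(Y)$ equals the probability that Brownian motion started at $Y$ and killed on $\pom$ exits $\om$ through $\Delta_Q$, and so it is dominated by the free Brownian hitting probability of $\overline{B_Q}$ in $\RR^{n+1}$, which by the explicit formula for the fundamental solution is exactly $\min(1,(a_1\ell(Q)/|Y-x_Q|)^{n-1})$. Equivalently and purely analytically, one applies the maximum principle on truncations $\om\cap B(x_Q,R)\setminus\overline{2B_Q}$ and lets $R\to\infty$, using that $v$ vanishes at infinity (since its boundary data is compactly supported, this follows from a standard iteration of Bourgain's estimate under the corkscrew/ADR hypotheses). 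Either route produces constants depending only on $n$ and the ADR constant, which completes the proof.
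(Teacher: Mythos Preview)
Your argument is correct, and it is close in spirit to the paper's but the implementation differs in a way worth noting.

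The paper builds the barrier by \emph{averaging} the fundamental solution over the surface ball: it sets
\[
H(X)=\frac{1}{\ell(Q)}\int_{\Delta_Q}\E(X,y)\,d\sigma(y),\qquad \E(X,y)=c_n|X-y|^{1-n}.
\]
By ADR, $H$ is bounded and continuous on all of $\RR^{n+1}$, harmonic in $\om$, satisfies $H\gtrsim 1$ on $Q$, and has the $(n-1)$-decay outside $4B_Q$. Since $H$ is continuous on $\overline{\om}$ and vanishes at infinity, the representation $H(X)=\int_{\pom}H\,d\hm^X$ is immediate, and one concludes $|u(X)|\le\hm^X(Q)\lesssim H(X)$ in a single line, with no case split on $\diam(\pom)$, no appeal to Wiener regularity of boundary points, and no separate maximum-principle argument on a subdomain.

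You instead use the \emph{un-averaged} fundamental solution centered at the single point $x_Q$ (equivalently, the Brownian hitting probability of $\overline{B_Q}$). This is equally valid; the probabilistic version is clean and self-contained, while your purely analytic version on $\om\setminus\overline{2B_Q}$ needs Wiener regularity of $\pom$ (to interpret ``$v\equiv 0$ on $\pom\setminus 2B_Q$'') and a decay-at-infinity input before the maximum principle can be invoked. None of this is wrong, but the paper's averaging trick is worth internalizing precisely because it absorbs all of those auxiliary issues into the continuity and boundedness of $H$.
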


\begin{proof}
Let us set
$$H(X) = \frac{1}{\ell(Q)} \int_{B_Q} \E(X,y) \, d\sigma(y),$$
where $$\E(X,Y) := \frac{c_n}{|X -Y|^{n-1}}$$
is the fundamental solution to the Laplacian in $\ree$. By the ADR condition and the local $\sigma$-integrability of 
$\E$, we know that $H$ is bounded in $\ree$ and we have
\begin{align}
  \label{lower_bound_on_Q} H(y) \gtrsim 1, \quad \forall y \in Q.
\end{align}
We also notice that
$$0 \le H(X) \lesssim \left(\frac{\ell(Q)}{|X - x_Q|} \right)^{n -1},\quad \forall X \in (4B_Q)^c,$$
where the implicit constants above depend on $n$ and the ADR constant.
It is straightforward to show that $H$ is harmonic in $\om$ and continuous in $\overline{\om}$. Thus, we have
$$H(X) = \int_{\pom} H(y) \, d \hm^X(y).$$
In particular, since $|f| \le 1_Q$, we get
$$|u(X)| \le \int_{Q} |f(y)| \, d\hm^X(y) \overset{\eqref{lower_bound_on_Q}}{\lesssim} H(X) \lesssim 1_{4B_Q}(X) + 1_{(4B_Q)^c}(X) \left(\frac{\ell(Q)}{|X - x_Q|} \right)^{n -1}$$
by the boundedness of $H$.
\end{proof}

This lemma readily yields a bound also for the non-tangential maximal operator acting on functions of the same type:
\begin{lemma}\label{Lem2.lem}
Let $Q \in \dd$. Suppose that $f$ is a Borel function satisfying $|f| \le 1_Q$ an set 
$u(X) = \int_{\pom} f(y)\, d \hm^X(y)$ for every $X \in \om$. Then for all $p > n/(n -1)$ we have
  $$\lVert N_*u \rVert_{L^p(\pom, \sigma)} \le C_1 \sigma(Q)^{1/p},$$
where $C_1$ depends on $n$, the ADR constant, $\eta$, $K$ and $p$.
\end{lemma}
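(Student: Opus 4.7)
The plan is to leverage Lemma \ref{Lem1.lem} to reduce the $L^p$ bound on $N_*u$ to a direct computation using the Ahlfors--David regularity of $\sigma$ together with a dyadic annular decomposition around $Q$. The role of the restriction $p > n/(n-1)$ will be precisely to ensure convergence of that annular sum.

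First, I would fix $y \in \pom$ and estimate $N_*u(y) = \sup_{X \in \Gamma(y)}|u(X)|$ pointwise by splitting into two regimes. If $y$ is close to $Q$, say $|y - x_Q| \le C_0 \ell(Q)$ for a suitable structural constant $C_0 = C_0(a_1, \alpha_1)$, I simply use the crude bound $|u(X)| \lesssim 1$ from Lemma \ref{Lem1.lem} to deduce $N_*u(y) \lesssim 1$. If $|y - x_Q| > C_0 \ell(Q)$, the geometry of the cone $\Gamma(y) \subset \widetilde{\Gamma}_{\alpha_1}(y)$ forces every $X \in \Gamma(y)$ to lie outside $4B_Q$: if we had $X \in 4B_Q$, then since $|X - y| \ge |y - x_Q| - |X - x_Q|$ is large while $|X - y| < \alpha_1 \delta(X) \le \alpha_1|X - x_Q|$ (because $x_Q \in \pom$), we would reach a contradiction for $C_0$ chosen large enough in terms of $\alpha_1$ and $a_1$. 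The same inequality $|X - y| < \alpha_1|X - x_Q|$ then yields $|X - x_Q| \ge (1+\alpha_1)^{-1}|y - x_Q|$, so by Lemma \ref{Lem1.lem}
\begin{align*}
  N_*u(y) \;\lesssim\; \sup_{X \in \Gamma(y)}\left(\frac{\ell(Q)}{|X - x_Q|}\right)^{n-1} \;\lesssim\; \left(\frac{\ell(Q)}{|y - x_Q|}\right)^{n-1}.
\end{align*}
(When $\diam(\pom) < \infty$, the additional cone piece $\ree \setminus B(y, R_0)$ contributes only a comparable or smaller term by the same decay estimate.)

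Next I would raise the pointwise bound to the $p$-th power and integrate. The near region contributes
\begin{align*}
  \int_{\Delta(x_Q, C_0\ell(Q))} (N_*u)^p \, d\sigma \;\lesssim\; \sigma(\Delta(x_Q, C_0 \ell(Q))) \;\lesssim\; \ell(Q)^n \;\approx\; \sigma(Q)
\end{align*}
by ADR. For the far region I decompose dyadically: setting $A_k \coloneqq \{y \in \pom : 2^k C_0 \ell(Q) \le |y - x_Q| < 2^{k+1} C_0 \ell(Q)\}$ for $k \ge 0$, ADR gives $\sigma(A_k) \lesssim (2^k \ell(Q))^n$, hence
\begin{align*}
  \int_{|y - x_Q| > C_0 \ell(Q)} (N_*u)^p \, d\sigma \;\lesssim\; \sum_{k \ge 0} (2^k \ell(Q))^n \cdot 2^{-kp(n-1)} \;=\; \ell(Q)^n \sum_{k \ge 0} 2^{k(n - p(n-1))}.
\end{align*}
The geometric series converges precisely when $p(n-1) > n$, i.e.\ $p > n/(n-1)$, and the total is $\lesssim \ell(Q)^n \approx \sigma(Q)$. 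Combining the two contributions yields $\|N_*u\|_{L^p(\pom, \sigma)}^p \lesssim \sigma(Q)$, which is the desired bound.

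The substantive step is the cone--geometry observation that gives the decay $N_*u(y) \lesssim (\ell(Q)/|y-x_Q|)^{n-1}$ for $y$ far from $Q$; once this is in hand, everything else reduces to the ADR volume estimate and a geometric sum, and the hypothesis $p > n/(n-1)$ is seen to be sharp for the method (matching the parallel role of this exponent in \cite[Proposition 5.1]{Hyt-Ros}).
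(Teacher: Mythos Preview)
Your proof is correct and follows essentially the same strategy as the paper's: use Lemma \ref{Lem1.lem} together with the cone relation $|y-x_Q|\lesssim |X-x_Q|$ for $X\in\Gamma(y)$ to obtain the pointwise decay $N_*u(y)\lesssim(\ell(Q)/|y-x_Q|)^{n-1}$ away from $Q$, then integrate over dyadic annuli and sum the resulting geometric series, which converges exactly when $p>n/(n-1)$. If anything, you are slightly more careful than the paper in explicitly verifying that $\Gamma(y)\cap 4B_Q=\emptyset$ once $|y-x_Q|>C_0\ell(Q)$ for a sufficiently large structural $C_0$; the paper uses the fixed threshold $k\ge 2$ for the annuli without making this point explicit.
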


\begin{proof}
Let $y \in \pom$ and suppose that $X \in \Gamma(y)$. Then by the definition of $\Gamma(y)$ we have that
$$|X - y| \approx_{\eta,K} \delta(X) \le |X - x_Q|.$$
It follows that 
$$|y - x_Q| \le |X - y| + |X - x_Q| \lesssim |X - x_Q|.$$
Thus, Lemma \ref{Lem1.lem} yields
\begin{align*}
  |u(X)| &\lesssim 1_{4B_Q}(X) + 1_{(4B_Q)^c}(X) \left(\frac{\ell(Q)}{|X - x_Q|} \right)^{n -1} \\
         &\lesssim 1_{4B_Q}(X) + 1_{(4B_Q)^c}(X) \left(\frac{\ell(Q)}{|y - x_Q|} \right)^{n -1}, \quad \forall X \in \Gamma(y).
\end{align*}
Let us set $A_k := 2^{k+1} \Delta_Q \setminus 2^k \Delta_Q$ for every $k \ge 2$. By the ADR property, we get
\begin{align*}
\int_{\pom} N_*u^p \, d\sigma &\le \int_{4\Delta_Q} N_*u^p \, d\sigma + \sum_{k =2}^\infty \int_{A_k} N_*u^p \, d\sigma \\
                              &\lesssim \sigma(4\Delta_Q) + \sum_{k=2}^\infty \int_{A_k} \left( \frac{\ell(Q)}{|y-x_Q|} \right)^{p(n-1)} \, d\sigma \\
                              &\lesssim \sigma(Q) + \sum_{k=2}^\infty \int_{A_k} \frac{1}{2^{k(n-1)p}} \, d\sigma \\
                              &\lesssim \sigma(Q) + \sigma(Q) \sum_{k=2}^\infty 2^{kn - k(n-1)p}
                              \lesssim \sigma(Q),
\end{align*}
since $p > n/(n-1)$. 
\end{proof}

\section{Corona decomposition for harmonic measure and the proof of Theorem \ref{LpEpsimpUR.thrm}}

In this section, we prove Theorem \ref{LpEpsimpUR.thrm} which, as we pointed out earlier, is enough to imply Theorem \ref{mainthrm.thrm}.
Before the proof, we recall some definitions and results from \cite{GMT}.

\begin{definition}\cite{DS2}.\label{d3.11}   
Let $\sbf\subset \dd$. We say that $\sbf$ is \emph{coherent} if the following conditions hold:
\begin{enumerate}

\item[(a)] $\sbf$ contains a unique maximal element $Q(\sbf)$ which contains all other elements of $\sbf$ as subsets.

\item[(b)] If $Q$ belongs to $\sbf$ and $Q\subset Q' \subset Q(\sbf)$ for any $Q' \in \dd$, then $Q' \in {\bf S}$.

\item[(c)] Given a cube $Q_j^k \in \sbf$, either all of its children (i.e. the cubes $P \in \dd_{k+1}$ such that $P \subset Q_j^k$) 
             belong to $\sbf$, or none of them do.
\end{enumerate}
\end{definition}

\begin{definition}[Corona decomposition for harmonic measure \cite{GMT}]\label{coronahm.def}
Let $\om \subset \ree$ be an open set satisfying the corkscrew condition with $n$-dimensional ADR boundary, and let $\hm$ 
be the harmonic measure for $\om$. We say that \emph{$\hm$ admits a corona decomposition} if $\dd$ can be decomposed into disjoint coherent 
subcollections $\sbf$ such that the following two conditions holds.
\begin{enumerate}
  \item[(1)] The maximal cubes, $Q(\sbf)$, satisfy a Carleson packing condition
             $$\sum_{Q(\sbf) \subset R} \sigma(Q(\sbf)) \le C \sigma(R), \quad \forall R \in \dd(\pom).$$

  \item[(2)] For each $Q(\sbf)$ there exists $P_{Q(\sbf)} \in \om$ such that 
             $$c^{-1} \ell(Q(\sbf)) \dist(P_{Q(\sbf)},Q(\sbf)) \le \dist(P_{Q(\sbf)},\pom) \le c \ell(Q(\sbf)),$$
             $$\hm^{P_{Q(\sbf)}}(3R) \approx \frac{\sigma(R)}{\sigma(Q(\sbf))} \quad \forall R \in \sbf,$$
             where the implicit constants above and $c$ are uniform in $\sbf$ and $R$.
\end{enumerate}
\end{definition}

Let us fix the value of $\eps \ll 1$ later. For each cube in $\dd$, let $P_Q \in B_Q$ be a corkscrew point
at scale $\eps \ell(Q)$ relative to $x_Q$. Then we have
$$\frac{\eps}{M} \ell(Q) \le \delta(P_Q)  \le \eps(\ell(Q)),$$
Let $y_Q \in \pom$ be the touching point for the point $P_Q$, that is, $|y_Q - P_Q| = \delta(P_Q)$. 
By choosing $\eps \ll 1$ to be small enough, we may assume that
$$B(y_Q, |y_Q - P_Q|) \subset B(x_Q,\tfrac{3}{4}a_0\ell(Q))\footnote{Note that $B(x_Q,a_0\ell(Q))$ is exactly ``$B_Q$" from \cite{GMT}. See Lemma \ref{lemmaCh} (4).}.$$
Next, for some parameter $\tau$ to be chosen later depending on $M$, dimension and the ADR constant, we set
\begin{align*}
  S_Q   &\coloneqq y_Q + 2\tau(P_Q - y_Q), \\
  V_Q   &\coloneqq B(P_Q, (1-\tau)\delta(P_Q)), \\
  V_Q^1 &\coloneqq B(P_Q, (1/2)\delta(P_Q)), \\
  V_Q^2 &\coloneqq B(S_Q, \tau\delta(P_Q)).
\end{align*}
Notice that $V_Q^1, V_Q^2 \subset V_Q$. We then have:

\begin{lemma}[{\cite[Lemma 3.3, proof of Lemma 3.7]{GMT}}] \label{Lem3.3AGMT.lemma}
  Suppose that $\tau \ll 1$ and $\eps \ll \tau$ are chosen appropriately depending on $M$, dimension and the ADR constant. Then 
  if $E_Q \subset Q \in \dd$ is such that 
  \begin{equation*} 
    \hm^{P_Q}(E_Q) \ge (1- \eps) \hm^{p_Q}(Q),
  \end{equation*}
  there exists a non-negative harmonic function $u_Q$ on $\om$ and a Borel function $f_Q$ with 
  \begin{equation*}
    u_Q(X) = \int_{\pom} f_Q \, d\hm^X, \quad 0 \le f_Q \le 1_{E_Q}
  \end{equation*}
  satisfying
  \begin{equation} \label{bigosc.eq}
    |m_{V_Q^1} u_Q - m_{V_Q^2} u_Q| \ge c_1,
  \end{equation}
  where $c_1$ depends only on $M$, $n$ and the ADR constant and $m_{B_i} u = \fint_{B_i} u \, dX$.
\end{lemma}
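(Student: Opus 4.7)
The plan is to take $f_Q := \mathbf{1}_{E_Q \setminus \Delta(y_Q, r)}$ for a surface ball radius $r$ on the order of $\tau^{1/2}\, \delta(P_Q)$. The intuition is that excising this tiny neighborhood of $y_Q$ from $E_Q$ barely affects the harmonic measure seen from the ``far'' point $P_Q$, but causes the resulting harmonic function $u_Q$ to vanish quantitatively near $y_Q$. Comparing a mean value computation over $V_Q^1$ against a boundary H\"older bound over $V_Q^2$ then yields the desired oscillation.

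As a preliminary, I would note that $\hm^{P_Q}(Q) \ge c_0 > 0$: since $P_Q$ is a corkscrew point for the surface ball $\Delta(y_Q, 2\delta(P_Q)) \subset Q$ (the inclusion uses the already-established $B(y_Q, |y_Q - P_Q|) \subset B(x_Q, \tfrac{3}{4} a_0 \ell(Q))$), Bourgain's lemma --- which proceeds via the same fundamental-solution barrier as Lemma~\ref{Lem1.lem} --- gives $\hm^{P_Q}(\Delta(y_Q, 2\delta(P_Q))) \ge c_0$, with $c_0$ depending on $M$, $n$ and the ADR constant. For the mean over $V_Q^1$: since $V_Q^1 = B(P_Q, \delta(P_Q)/2) \subset \om$, the mean value property yields $m_{V_Q^1} u_Q = u_Q(P_Q)$. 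The hypothesis on $E_Q$ together with Lemma~\ref{Lem1.lem} applied to the surface ball $\Delta(y_Q,r)$ in place of $Q$ (the argument there adapts verbatim since it relies only on ADR) give
\[
u_Q(P_Q) \ge (1-\eps)\, \hm^{P_Q}(Q) - \hm^{P_Q}(\Delta(y_Q,r)) \ge (1-\eps) c_0 - C\left(r/\delta(P_Q)\right)^{n-1}.
\]

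For the mean over $V_Q^2$: estimate $u_Q(X) \le w(X) := \hm^X(\pom \setminus \Delta(y_Q, r))$. The function $w$ is bounded harmonic on $\om$ and vanishes continuously on $\Delta(y_Q, r)$, and by the quantitative boundary H\"older estimate for harmonic functions in ADR domains --- a consequence of uniform capacity density and Wiener's criterion --- we have
\[
w(X) \le C \left(\frac{|X-y_Q|}{r}\right)^\alpha, \qquad X \in \om,\ |X - y_Q| \le r/2,
\]
for some exponent $\alpha > 0$ depending only on ADR and dimension. Taking $r := \tau^{1/2}\, \delta(P_Q)$, we have $|X-y_Q|/r \le 3\tau^{1/2}$ on $V_Q^2 \subset B(y_Q, 3\tau\delta(P_Q))$, so $m_{V_Q^2} u_Q \le C \tau^{\alpha/2}$.

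Combining the estimates,
\[
m_{V_Q^1} u_Q - m_{V_Q^2} u_Q \ge (1-\eps) c_0 - C \tau^{(n-1)/2} - C \tau^{\alpha/2}.
\]
Choosing $\tau \ll 1$ (depending on $M$, $n$ and the ADR constant) so that the last two terms are each at most $c_0/8$, and then $\eps \ll \tau$ so that $(1-\eps)c_0 \ge 7c_0/8$, produces an oscillation of at least $c_0/2$, establishing the claim with $c_1 := c_0/2$. The main obstacle is justifying the quantitative boundary H\"older estimate on $w$; while this is a standard consequence of uniform capacity density for ADR boundaries, it lies just outside the elementary barrier calculations of Lemma~\ref{Lem1.lem} and must either be cited from the potential-theory literature (the route presumably taken in \cite{GMT}) or derived via a separate barrier argument combining the fundamental solution $\E$ with the ADR lower bound on capacity.
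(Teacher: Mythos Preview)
The paper does not give its own proof of this lemma; it is quoted verbatim from \cite[Lemma 3.3 and the proof of Lemma 3.7]{GMT}.  Your argument is essentially the GMT construction: one excises from $E_Q$ a small surface ball centered at the touching point $y_Q$ so that the resulting $u_Q$ is forced to vanish H\"older-continuously near $y_Q$ (giving smallness on $V_Q^2$) while, by Bourgain's estimate together with the bound $\omega^{P_Q}(\Delta(y_Q,r))\lesssim (r/\delta(P_Q))^{n-1}$, the value at $P_Q$ (hence the mean over $V_Q^1$) stays bounded below.  The choice $r\approx \tau^{1/2}\delta(P_Q)$ balances the two competing requirements correctly.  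The one external ingredient you flag---the boundary H\"older estimate---is indeed the standard consequence of the capacity density condition enjoyed by $n$-ADR sets and is exactly what is invoked in \cite{GMT}; so your caveat is accurate but not a gap.
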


\begin{definition}[Low density cubes \cite{GMT}]
  Let $0 < \delta \ll 1$ be a fixed constant. 
  For a cube $R \in \dd$ we say that a subcube $Q \in \dd_R$ is a \emph{low density cube} and write $Q \in \text{LD}(R)$ if $Q$ is a maximal cube (with resepect to containment) satisfying
  \begin{equation*}
    \frac{\hm^{P_{R}}(Q)}{\sigma(Q)} \le \delta \frac{\hm^{P_{R}}(R)}{\sigma(R)}.
  \end{equation*}
  For any cube $R \in \dd$, we denote $\text{LD}^0(R) = \{R \}$ and define $\text{LD}^k(R)$, $k \ge 1$, inductively by 
  $$\text{LD}^k(R) = \bigcup_{Q \in \text{LD}^{k-1}(R)} \text{LD}(Q).$$
\end{definition}

In the proof of the corona decomposition for harmonic measure in \cite{GMT}, $\eps$-approximability is used only to prove a 
packing condition for the low density cubes \cite[Lemma 3.7]{GMT}. Thus, we actually have:

\begin{lemma} \label{LDpack.lemma}
  Suppose that for any $m \ge 1$ and $R \in \dd$ we have 
  \begin{equation}\label{LDpackcl.eq1}
    \sum_{k = 0}^m \sum_{Q \in LD^k(R)} \sigma(Q) \le C \sigma(R),
  \end{equation}
  where $C$ is independent of $m$ and $R$. Then $\hm$ admits a corona decomposition.
\end{lemma}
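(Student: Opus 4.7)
The plan is to follow the stopping-time construction of the corona decomposition in \cite[Section 3]{GMT}, substituting the assumed packing (\ref{LDpackcl.eq1}) for \cite[Lemma 3.7]{GMT}. As noted just above the statement of Lemma \ref{LDpack.lemma}, that lemma is the only place in \cite{GMT} where $\eps$-approximability is invoked, so once (\ref{LDpackcl.eq1}) is supplied the rest of the argument transfers verbatim.

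Concretely, for each cube $R$ that serves as a ``top'' I would form the coherent subfamily
$$\sbf(R) \coloneqq \{Q \in \dd_R : Q \not\subsetneq P \text{ for any } P \in \text{LD}(R)\}.$$
The three conditions of Definition \ref{d3.11} are immediate from this description: $R$ is the unique maximal element of $\sbf(R)$; a chain from $R$ down to any element of $\sbf(R)$ lies entirely in $\sbf(R)$; and the children of each $Q \in \sbf(R)$ all belong to $\sbf(R)$ or none do, according to whether $Q \notin \text{LD}(R)$ or $Q \in \text{LD}(R)$. The elements of $\text{LD}(R)$ serve as the new tops, and iterating the construction, the set of tops contained in a given $R \in \dd$ is precisely $\bigcup_{k \ge 0} \text{LD}^k(R)$. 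Condition (1) of Definition \ref{coronahm.def} is therefore the hypothesis (\ref{LDpackcl.eq1}) in the limit $m \to \infty$.

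For condition (2) I would take $P_{Q(\sbf)}$ to be the standard corkscrew point associated with the top $R = Q(\sbf)$ used in \cite{GMT}. The lower bound in $\hm^{P_{Q(\sbf)}}(3R') \approx \sigma(R')/\sigma(Q(\sbf))$ for $R' \in \sbf(R)$ is built into the stopping rule: every $R' \in \sbf(R)$ fails the low-density condition, which forces $\hm^{P_R}(R')/\sigma(R') \gtrsim \hm^{P_R}(R)/\sigma(R)$, while $\hm^{P_R}(R) \gtrsim 1$ is a standard nondegeneracy estimate for harmonic measure in corkscrew/ADR domains. The matching upper bound, which is the only genuinely technical point in this step, is proved in \cite{GMT} using only ADR, the corkscrew condition, and the structure of the stopping-time region, with no appeal to $\eps$-approximability; I would simply invoke that argument here. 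Thus there is no real obstacle beyond the one already resolved in \cite{GMT}, and the content of Lemma \ref{LDpack.lemma} is essentially the bookkeeping observation that packing of the iterated low-density cubes is the only extra ingredient required to run the \cite{GMT} construction.
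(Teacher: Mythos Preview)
Your proposal is correct and matches the paper's treatment: the paper does not give an independent proof of Lemma~\ref{LDpack.lemma} but simply records, in the sentence preceding it, that in \cite{GMT} $\eps$-approximability is used only to establish the packing condition \cite[Lemma~3.7]{GMT}, so that assuming \eqref{LDpackcl.eq1} directly yields the corona decomposition by the remainder of the \cite{GMT} argument. Your write-up spells out the stopping-time construction in more detail than the paper does, with one minor imprecision (the low-density cubes themselves belong to $\sbf(R)$ and \emph{do} satisfy the low-density inequality, but the required lower bound on $\hm^{P_R}(3R')$ is recovered via the parent cube, which is why the factor $3$ appears in Definition~\ref{coronahm.def}(2)); this does not affect the validity of the approach.
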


We now prove Theorem \ref{LpEpsimpUR.thrm}.

\begin{proof}[Proof of Theorem \ref{LpEpsimpUR.thrm}] 
Let us start by fixing $\eta$ and $K$ depending on $\tau$ and $\eps$ so that
\begin{equation}\label{VQcontainment.eq}
V_Q \subset U_Q(\eta,K).
\end{equation}
This can be done since every point $Y \in V_Q$ has the property that 
$$\delta(Y) \approx_{\eps, \tau} \ell(Q).$$
As $\eps$ and $\tau$ depend only on $M$, $n$ and the ADR constant so do $\eta$ and $K$.
We also note that by the construction of the regions $U_Q(\eta,K)$ we have
\begin{equation}\label{VQoverlap.eq}
\sum_{Q \in \dd} 1_{V_Q}(X) \lesssim 1, \quad \forall X \in \om,
\end{equation}
where the implicit constant depends on $\eta, K$, $M$, $n$ and the ADR constant.

By Lemma \ref{LDpack.lemma}, to prove the theorem it is enough to show \eqref{LDpackcl.eq1}. We do this now. For every cube $Q \in \dd$ and every $m \ge 1$ we set
  $$\A(Q,m) \coloneqq \frac{1}{\sigma(Q)}\sum_{Q' \in \cup_{k =0}^m \text{LD}^{k}(Q)}\sigma(Q').$$
Then \eqref{LDpackcl.eq1} is equivalent to the statement
  $$\A(m) \coloneqq \sup_{Q \in \dd} \A(Q,m) \le C,$$
where $C$ is independent of $m$.

Let us fix $R \in \dd$ and set $\F_{1,m} \coloneqq \bigcup_{k =1}^m \text{LD}^{k}(R)$.
For $Q \in \F_{1,m}$ we set $L_Q: = \cup_{Q' \in \text{LD}(Q)} Q'$ and 
  $$E_Q := Q \setminus L_Q.$$ 
The sets $\{E_Q\}_{Q \in \F_{1,m}}$ are pairwise disjoint by definition. Moreover, by the definition of $\text{LD}(Q)$, we have
  $$\hm^{P_Q}(L_Q) \le \sum_{Q' \in \text{LD}(Q)} \hm^{P_Q}(Q') \le \delta \sum_{Q' \in \text{LD}(Q)} \frac{\sigma(Q')}{\sigma(Q)} 
    \hm^{P_Q}(Q) \le \delta \hm^{P_Q}(Q)$$
and hence
\begin{equation}\label{LDpackcl.eq3}
  \hm^{P_Q}(E_Q) \ge (1-\delta)\hm^{P_Q}(Q) \ge (1 -\eps)\hm^{P_Q}(Q)
\end{equation} 
as long as we choose $\delta \le \eps$. Then we may apply Lemma \ref{Lem3.3AGMT.lemma} to obtain a collection of functions $\{u_Q\}_{Q \in 
\F_{1,m}}$ such that
  $$u_Q(X) = \int_{\pom} f_Q \, d\hm^X, \quad 0 \le f_Q \le 1_{E_Q}$$
for some Borel function $f_Q$ and
  $$|m_{V_Q^1} u_Q - m_{V_Q^2} u_Q| \ge c_1.$$
Let $\eps_1 > 0$ to be chosen. Let $\Xi$ denote the collection of sequences $\{a = (a_Q): Q \in \F_{1,m}, a_Q \in \{-1,+1\}\}$ and let $\lambda$ be a 
probability measure on $\Xi$ which assigns equal probability to $-1$ and $+1$. For every $a \in \Xi$ we set
  $$u_a(X) = \sum_{Q \in \F_{1,m}} a_Q u_Q(X).$$
Note that since $f_Q$ are Borel functions with disjoint supports, $\sum_{Q \in \F_{1,m}} a_Qf_Q$ is a Borel function
and clearly 
  $$|u_a(X)| \le \int_{\pom} \sum_{Q \in \F_{1,m}} |a_Q|f_Q \, d\hm^X \le \sum_{Q \in \F_{1,m}} \hm^X(E_Q) \le 1, \quad \forall X \in \om.$$
We now apply the $\eps$-approximability in $L^p$ property (see Theorem \ref{mainthrm.thrm}) with
$``\eps" = \eps_1$: for each $a \in \Xi$ there exists $\varphi_a \in BV_{\text{loc}}$ such that 
$$\begin{cases}
  \lVert N_*(u_a - \varphi_a) \rVert_{L^p(\pom, \sigma)} \le \eps_1 C_p \lVert N_*u_a\rVert_{L^p(\pom, \sigma)} \\
  \lVert \C(\nabla \varphi_a) \rVert_{L^p(\pom, \sigma)} \le C_{\eps_1} C_p \lVert N_*u_a\rVert_{L^p(\pom, \sigma)}.
\end{cases}$$
By Lemma \ref{Lem2.lem} we also have
\begin{equation}\label{Ntboundwcube.eq}
  \lVert N_*(u_a - \varphi_a) \rVert_{L^p(\pom, \sigma)} \le  \eps_1 \lVert N_*u_a\rVert_{L^p(\pom, \sigma)} \le C_1 
  \eps_1 \sigma(R)^{1/p}
\end{equation}
and 
\begin{equation}\label{CMEvarphi.eq}
  \lVert \C(\nabla \varphi_a) \rVert_{L^p(\pom, \sigma)} \le C_{\eps_1}C_1\sigma(R)^{1/p},
\end{equation}
where $C_1$ depends on $n$, the ADR constant, $\eta$, $K$ and $p$. By Chebyshev's inequality and 
\eqref{Ntboundwcube.eq}, for each $a \in \Xi$ we have
  $$\sigma(\{x \in R: N_*(u_a - \varphi_a)(x) > C_2\eps_1 \}) \le \frac{C_1^p}{C_2^p}\sigma(R).$$
We will fix the exact value of $\gamma \in (0,1)$ later but regardless of the exact value, we may choose
$C_2$ so that $C_1^p / C_2^p < \gamma/2$. Let us set $\eps_0 := C_2 \eps_1$. It follows that for each 
$a \in \Xi$ there exists a set $F(R,a) \subset R$ such that $\sigma(F(R,a)) > (1-\gamma) \sigma(R)$ 
and for all $y \in F(R,a)$
$$|u_a(X) - \varphi_a(X)| \le \eps_0, \quad \forall X \in \Gamma(y).$$
Now for each $a \in \Xi$ we let $\F_{2,m}(a)$ be the collection of cubes $Q \in \F_{1,m}$ such that $Q \cap F(R,a) = 
\emptyset$. We then let $\widetilde{\F}_{2,m}(a)$ be the collection of maximal cubes in $\F_{2,m}(a)$ with respect to 
inclusion and $\F_{3,m}(a) = \F_{1,m} \setminus \F_{2,m}(a)$. By maximality,
\begin{equation}\label{AmpleSawtooth.eq}
  \sum_{Q^* \in \widetilde{\F}_{2,m}(a)} \sigma(Q^*) \le \gamma \sigma(R).
\end{equation}
Suppose that $Q \in \F_{3,m}(a)$. Then there exists $y \in F(R,a) \cap Q$.  It follows that $U_Q \subset \Gamma(y)$ 
and hence 
  $$|u_a(X) - \varphi_a(X)| \le \eps_0, \quad \forall X \in U_Q.$$
In particular, by \eqref{VQcontainment.eq} we have for all $Q \in \F_{3,m}(a)$
\begin{equation}\label{epsinsawtooth.eq}
  |u_a(X) - \varphi_a(X)| \le \eps_0, \quad \forall X \in V_Q.
\end{equation}

Now, using \eqref{bigosc.eq} and Khintchine's inequality we obtain for every $Q \in \F_{1,m}$
\begin{equation}\label{khintchine.estimate}
\begin{split}
  c_1 &\le |m_{V_Q^1} u_Q - m_{V_Q^2} u_Q| \\
      &\le \left(\sum_{Q' \in \F_{1,m}} |m_{V_{Q}^1} u_{Q'} - m_{V_{Q}^2}u_{Q'}|^2 \right)^{1/2} \\
      &\le \frac{1}{c_3} \int_{\Xi} \left| \sum_{Q' \in \F_{1,m}} a_{Q'}(m_{V_{Q}^1}u_{Q'} - m_{V_{Q}^2}u_{Q'})\right| \, d\lambda(a) \\
      &= \frac{1}{c_3} \int_{\Xi}|m_{V_Q^1} u_a - m_{V_Q^2} u_a|\, d\lambda(a),
  \end{split}
\end{equation}
where $c_3$ is a universal constant.
Then it follows that integrating over $V_Q$ (note that $\diam(V_Q) \approx \ell(Q)$) and summing in $Q$ we have
\begin{equation}\label{bigest1.eq}
  \begin{split}
    \sum_{Q \in \F_{1,m}} \sigma(Q) &\lesssim \sum_{Q \in \F_{1,m}} \ell(Q)^n \\
                                    &\overset{\eqref{khintchine.estimate}}{\lesssim} \sum_{Q \in \F_{1,m}} \int_{V_Q}  \int_{\Xi} \frac{1}{\ell(Q)}|m_{V_Q^1} u_a - m_{V_Q^2} u_a|\, d\lambda(a)\, dX \\
                                    & \lesssim   \int_{\Xi}  \sum_{Q \in \F_{3,m}(a)} \int_{V_Q}  \frac{1}{\ell(Q)}|m_{V_Q^1} u_a - m_{V_Q^2} u_a|\, dX\, d\lambda(a) \\
                                    & \qquad + \int_{\Xi}  \sum_{Q \in \F_{2,m}(a)} \int_{V_Q}  \frac{1}{\ell(Q)}|m_{V_Q^1} u_a - m_{V_Q^2} u_a|\, dX\, d \lambda(a) \\
                                    &\lesssim \int_{\Xi}  \sum_{Q \in \F_{3,m}(a)} \int_{V_Q}  \frac{1}{\ell(Q)}|m_{V_Q^1} u_a - m_{V_Q^2} u_a|\, dX\, d \lambda(a) \\
                                    & \qquad +\int_{\Xi} \sum_{Q \in \F_{2,m}(a)} \sigma(Q) \, d\lambda(a),
  \end{split}
\end{equation}
where the implicit constants depend on $M$, $n$ and the ADR constant and we used that $\lVert u \rVert_\infty \le 1$
in the last inequality.
We have by \eqref{AmpleSawtooth.eq} and the definitions of $\widetilde{\F}_{2,m}(a)$ and $\A(m)$ that
\begin{align}\label{restpack.eq}
  \begin{split}
    \frac{1}{\sigma(R)} \sum_{Q \in \F_{2,m}(a)} \sigma(Q) 
    &\le \frac{1}{\sigma(R)} \, \sum_{Q^* \in \widetilde{\F}_{2,m}(a)} \sum_{Q \in \bigcup_{k=0}^{m-1} \text{LD}^k(Q^*)} \sigma(Q) \\
    &\le\frac{1}{\sigma(R)} \sum_{Q^* \in \widetilde{\F}_{2,m}(a)} \sigma(Q^*)\A(m)
    \le \gamma \A(m).
  \end{split}
\end{align}
Set $B^{**}_R := B(x_R, 5a_1 \ell(R))$ and $\Delta^{**}_R = B^{**}_R \cap \om$ and note that $V_Q \subset B^{**}_R$ for all $Q \in \dd(R)$.
We immediately have that
\begin{align*}
\int_{B_R^{**} \cap \om} |\nabla \varphi(X) |\, dX &\lesssim \ell(R)^n \fint_{\Delta^{**}_R} \C(\nabla \varphi)(y)\, d
\sigma(y) 
\\& \approx \sigma(R)  \fint_{\Delta^{**}_R} \C(\nabla \varphi)(y)\, d\sigma(y).
\end{align*}
Using \eqref{epsinsawtooth.eq}, \eqref{VQoverlap.eq}, the Poincar\'e inequality\footnote{See e.g. the proof of \cite[Theorem 5.11.1]{Z}, for the case of BV functions.} and \eqref{CMEvarphi.eq} we obtain
\begin{equation}\label{goodpack.eq}
\begin{split}
&\int_{\Xi}  \sum_{Q \in \F_{3,m}(a)} \int_{V_Q}  \frac{1}{\ell(Q)}|m_{V_Q^1} u_a - m_{V_Q^2} u_a|\, dX\, d\lambda(a)
\\& \quad \lesssim \int_{\Xi}  \sum_{Q \in \F_{3,m}(a)} \int_{V_Q}  \frac{1}{\ell(Q)}|m_{V_Q^1} \varphi_a - m_{V_Q^2} 
\varphi_a| \, dX\, d\lambda(a) + \eps_0\int_{\Xi}   \sum_{Q \in \F_{3,m}(a)} \sigma(Q) 
\\& \quad {\lesssim \int_{\Xi}  \sum_{Q \in \F_{1,m}} \int_{V_Q} |\nabla \varphi_a | \, dX \, d\lambda + \eps_0 \sum_{Q 
\in \F_{1,m}} \sigma(Q) }
\\   &\quad  \lesssim  \int_{\Xi} \int_{B_R^{**} \cap \om}  |\nabla \varphi_a | \, dX \, d\lambda + \eps_0 \sum_{Q \in 
\F_{1,m}} \sigma(Q) 
\\  &\quad \lesssim  \int_{\Xi}  \fint_{\Delta^{**}} \C(\nabla \varphi)(y)\, d\sigma(y) \, d\lambda + \eps_0 \sum_{Q \in 
\F_{1,m}} \sigma(Q) 
\\ &\quad \lesssim  \int_{\Xi} \sigma(R) \left( \fint_{\Delta^{**}} (\C(\nabla \varphi_a)(y))^p\, d\sigma(y)\right)^{1/p} \, d
\lambda + \eps_0 \sum_{Q \in \F_{3,m}(a)} \sigma(Q) 
\\ & \quad \lesssim  C_{\eps_1}C_1\sigma(R)  + \eps_0 \sum_{Q \in \F_{3,m}} \sigma(Q),
\end{split}
\end{equation}
where the {\it implicit} constant depends only on $M$, $n$ and the ADR constant.
Dividing \eqref{bigest1.eq} by $\sigma(R)$ and using \eqref{restpack.eq} and \eqref{goodpack.eq} we have shown
$$\A(R,m) \lesssim C_{\eps_1}C_1 + \eps_0\A(R,m) + \gamma \A(m),$$
where the implicit constant depends only on $M$, $n$ and the ADR constant. Taking the supremum over $R \in \dd$ 
and recalling that $\eps_0 = C_2\eps_1$ we have
$$\A(m) \lesssim C_{\eps_1}C_1 + C_2\eps_1\A(m) + \gamma \A(m).$$
We recall the order we have chosen the constants and first choose $\gamma \ll 1$; this choice dictates $C_2$ ($C_2$ 
depends on $M$, $p$, $\gamma$, $n$, the ADR constant and $C_1$). Finally, we choose $\eps_1 \ll 1$ depending on 
$C_2$, $M$, $n$ and the ADR constant. Thus,
$$\A(m) \le C_{M,n,ADR} C_{\eps_1}C_1,$$
which allows us to conclude the claim. Here we have used the fact that the quantity $\A(m)$ is finite (to be more precise, we have 
$\A(m) \le m + 1< \infty$), which allows us to choose $\gamma$ and $\eps_1$ to be so small that $\A(m) - c(C_2 \eps_1 + \gamma) \A(m) > 0$
for a structural constant $c$ which was implicit in the estimates.
This concludes the proof.
\end{proof}

\section*{Acknowledgments}
The authors would like to thank Steve Hofmann and Svitlana Mayboroda for their mentorship and advice during the preparation of this article.

\end{document}